\newtheorem{theorem}{Theorem}
\newtheorem{corollary}[theorem]{Corollary}
\newtheorem{lemma}[theorem]{Lemma}
\newtheorem{proposition}[theorem]{Proposition}
\newtheorem{remark}[theorem]{Remark}
\newenvironment{proof}[1][Proof]{\noindent\textbf{#1.} }{\newline \hspace*{\textwidth}\hspace*{-0,4cm} \rule{0.5em}{0.5em} \vspace{0,2cm}}
\newcommand{\R}{\mathbb{R}}
\newcommand{\N}{\mathbb{N}}
\begin{document}

\title{Nonconvolution Nonlinear Integral Volterra Equations with
  Monotone Operators}
\author{M. R. Arias$^{1*}$, R. Ben\'{\i}tez$^{2*}$, V. J. Bol\'os$^1$ \\
\\
{\small $^1$ Departamento de Matem\'aticas,}\\
{\small Facultad de Ciencias, Universidad Extremadura.}\\
{\small Avda. de Elvas s/n, 06071 Badajoz, Spain.}\\
{\small e-mail\textup{: \texttt{arias@unex.es}\qquad \texttt{vjbolos@unex.es}}} \\
\\
{\small $^2$ Departamento de Matem\'aticas,}\\
{\small Centro Universitario de Plasencia, Universidad Extremadura.}\\
{\small Avda. Virgen del Puerto 2, 10600 Plasencia, Spain.}\\
{\small e-mail\textup{: \texttt{rbenitez@unex.es}}} \\
}
\footnotetext{$^*$Research partially supported by CICYT (Project BFM2001-0849), Spain.}
\date{February 2005}
\maketitle

\begin{abstract}
Some results about existence, uniqueness, and attractive behaviour of solutions for nonlinear Volterra integral equations with non-convolution kernels are presented in this paper. These results are based on similar ones about nonlinear Volterra integral equations with convolution kernels and some comparison techniques. Therefore, this paper is devoted to find a wide class of nonconvolution Volterra integral equations where their solutions behave like those of Volterra equations with convolution kernels.
\end{abstract}

\section{Introduction}

The aim of this paper is to the study of the nonlinear Volterra integral
equation
\begin{equation}
  \label{eq:vol1}
  u\left( x\right) =\int_0^xk\left( x,s\right) g\left( u\left( s\right) \right) \,ds,
\end{equation}
that will be denoted by $\left( k,g\right) $. We will assume that
the following conditions are held.
\begin{enumerate}
\item [$\mathbf{K}_1$.] The kernel $k:\R ^2\to\R ^+$ is a
  locally bounded function, such that $k\left( x,s\right) =0$ whenever $s>x$.
\item [$\mathbf{K}_2$.] For every $x\in\R $, the map $s\to
  k\left( x,s\right) $ is locally integrable, and $K\left( x\right) =\int_0^xk\left( x,s\right) \,ds$ is a
  strictly increasing function.
\item [$\mathbf{G}_1$.] The nonlinearity $g$ is a
  strictly increasing continuous function, vanishing on $(-\infty,0]$, and such
  that $g'>0$ almost everywhere.
\end{enumerate}
From now on, these conditions will be referred to as (GC).

Solutions of an equation $\left( k,g\right) $ are fixed points of the operator
$T_{kg}$, defined as
\begin{equation}
  \label{eq:op1}
  T_{kg}f\left( x\right) :=\int_0^xk\left( x,s\right) g\left( f\left( s\right) \right) \,ds.
\end{equation}
The monotone behaviour of $T_{kg}$ is an immediate consequence of $\mathbf{G}_1$ and the strictly increasing behaviour of the integral operator; i.e.,  if $f_1\leq f_2$, then
$T_{kg}f_1\leq T_{kg}f_2$. Moreover, since $g\left( 0\right) =0$, the
zero function is a solution of (\ref{eq:vol1}), known as \textit{the
trivial solution}.

The following two lemmas allow us to consider only bounded solutions on a certain interval $[0,\delta]$, for some positive $\delta$. This kind of solutions will be referred to as \textit{bounded near zero} functions.

\begin{lemma}
\label{lema:1}
Let $k$ be a kernel satisfying the following inequality,
  \begin{equation}
\label{prop:szwarc}
k\left( x,s\right) \leq k\left( y,s\right) ,\qquad \forall x\leq y,
\end{equation}
for each $s\in \R$. Then, the operator $T_{kg}$ transforms positive functions into
increasing functions.
\end{lemma}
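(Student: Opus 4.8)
The plan is to show directly that if $f\ge 0$ then $T_{kg}f$ is increasing, i.e.\ that $x\le y$ implies $T_{kg}f(x)\le T_{kg}f(y)$. First I would write out the difference
\[
T_{kg}f(y)-T_{kg}f(x)=\int_0^y k(y,s)g(f(s))\,ds-\int_0^x k(x,s)g(f(s))\,ds,
\]
and split the first integral at $s=x$, obtaining
\[
T_{kg}f(y)-T_{kg}f(x)=\int_0^x\bigl(k(y,s)-k(x,s)\bigr)g(f(s))\,ds+\int_x^y k(y,s)g(f(s))\,ds.
\]

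Next I would argue that both terms on the right-hand side are nonnegative. For the second term, $k(y,s)\ge 0$ by $\mathbf{K}_1$ and $g(f(s))\ge 0$ because $f(s)\ge 0$ and $g$ vanishes on $(-\infty,0]$ and is increasing, so $g\ge 0$ on $[0,\infty)$; hence the integrand is nonnegative on $[x,y]$. For the first term, the hypothesis \eqref{prop:szwarc} gives $k(y,s)-k(x,s)\ge 0$ for every $s$ since $x\le y$, and again $g(f(s))\ge 0$, so that integrand is nonnegative on $[0,x]$ as well. Therefore the whole difference is $\ge 0$, which is exactly the assertion that $T_{kg}f$ is increasing.

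The only genuinely delicate point is the bookkeeping with the domains of integration and the convention $k(x,s)=0$ for $s>x$: one must make sure the splitting is legitimate and that integrability on $[0,x]$ of $s\mapsto k(y,s)g(f(s))$ holds, which follows from local integrability of $s\mapsto k(y,s)$ in $\mathbf{K}_2$ together with boundedness of $g\circ f$ on the compact interval (using continuity of $g$ and the implicit assumption that $f$ is, say, measurable and bounded near zero). Everything else is just sign-chasing, so I expect the proof to be short; the main thing to be careful about is not to invoke monotonicity of $g$ where only nonnegativity is needed, and vice versa.
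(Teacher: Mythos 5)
Your proof is correct and is essentially the argument the paper gives: the paper simply extends $\int_0^x k(x,s)g(f(s))\,ds$ to $\int_0^y k(x,s)g(f(s))\,ds$ using the convention $k(x,s)=0$ for $s>x$ and then compares integrands on $[0,y]$, which is the same sign-chasing as your two-term split. No gap; the integrability remarks are fine but not needed beyond what $\mathbf{K}_1$, $\mathbf{K}_2$ already provide.
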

\begin{proof}
  Let $f$ be a positive function, and let $x\leq y$. From
  $\mathbf{K}_1$, we have
\begin{equation*}
  \begin{split}
    T_{kg}f\left( x\right) =&\int_0^xk\left( x,s\right) g\left( f\left( s\right) \right) \,ds=\int_0^yk\left( x,s\right) g\left( f\left( s\right) \right) \,ds\\
\leq &\int_0^yk\left( y,s\right) g\left( f\left( s\right) \right) \,ds=T_{kg}f\left( y\right) .\\
  \end{split}
\end{equation*}
\end{proof}

\begin{lemma}
\label{lastlemma}
  Let $f$ be a positive function. Then, for every $x$ in its domain of
  definition, $T_{kg}f$ is bounded on $[0,x]$.
\end{lemma}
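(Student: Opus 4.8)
The plan is to exploit the local boundedness of the kernel built into $\mathbf{K}_1$ together with the fact that $g$ takes only nonnegative values on the range of a positive function. Fix $x$ in the domain of $f$; it suffices to produce a single constant that dominates $T_{kg}f(t)$ for every $t\in[0,x]$.

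First I would note that, since $g$ is increasing and vanishes on $(-\infty,0]$, one has $g(y)\ge g(0)=0$ for all $y\ge 0$, and hence $g(f(s))\ge 0$ for every $s$ because $f$ is positive. This nonnegativity of the integrand is what allows one to enlarge the variable upper limit of integration to the fixed value $x$ and, after a crude estimate on the weights, to get a bound uniform in $t$.

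Next, by $\mathbf{K}_1$ the kernel $k$ is locally bounded, so on the compact square $[0,x]\times[0,x]$ there is a constant $M>0$ with $k(t,s)\le M$. Then for every $t\in[0,x]$,
\begin{equation*}
0\le T_{kg}f(t)=\int_0^t k(t,s)\,g(f(s))\,ds\le M\int_0^t g(f(s))\,ds\le M\int_0^x g(f(s))\,ds,
\end{equation*}
and the last quantity is a finite constant independent of $t$, the finiteness being exactly the requirement that $g\circ f$ be integrable on $[0,x]$ — which is what makes $T_{kg}f$ well defined on the domain of $f$ in the first place. Hence $T_{kg}f$ is bounded on $[0,x]$ by $M\int_0^x g(f(s))\,ds$, as claimed.

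The argument is essentially mechanical, and I expect the only real obstacle to be the justification of the finiteness of $\int_0^x g(f(s))\,ds$: one must make sure the class of positive functions considered is restricted enough — e.g. to those for which $g\circ f$ is locally integrable, as is implicit in the equation $(k,g)$ making sense — so that this integral over the compact interval $[0,x]$ is finite. If instead the positive functions in play are continuous (as the solutions one ultimately cares about are), then $g\circ f$ is continuous, hence bounded on the compact $[0,x]$, and one may alternatively write $T_{kg}f(t)\le\bigl(\sup_{[0,x]}g\circ f\bigr)K(t)\le\bigl(\sup_{[0,x]}g\circ f\bigr)K(x)$, using that $K$ is increasing by $\mathbf{K}_2$; either route closes the proof.
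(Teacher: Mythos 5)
Your proof is correct, and it is essentially the paper's argument in a slightly cruder form. The paper dominates $k$ on the triangle over $[0,x]$ by the auxiliary kernel $\overline{k}(x,s)=\max\{k(t,s):0\leq t\leq x\}$, which satisfies the monotonicity hypothesis (\ref{prop:szwarc}) of Lemma \ref{lema:1}; hence $T_{\overline{k}g}f$ is increasing and its value at $x$ bounds $T_{kg}f$ on all of $[0,x]$. You instead replace the kernel by the single constant $M=\sup_{[0,x]^2}k$, which exists by $\mathbf{K}_1$, and enlarge the upper limit of integration to $x$; this is the same idea (make the dominating integral monotone in its upper endpoint) with the $s$-dependence of the majorant thrown away. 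The one point you raise --- finiteness of $\int_0^x g(f(s))\,ds$ --- is a genuine issue, but it is present in exactly the same way in the paper's proof, since $T_{\overline{k}g}f(x)=\int_0^x\overline{k}(x,s)g(f(s))\,ds$ is likewise not guaranteed finite merely because $T_{kg}f$ is defined on $[0,x]$ (the kernel $k(x,\cdot)$ may vanish where $g\circ f$ is large). The paper silently assumes this integrability as part of $T_{kg}f$ ``being defined''; your explicit flagging of it, and your observation that it is automatic for continuous $f$ via $K(x)$, is if anything a small improvement rather than a gap.
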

\begin{proof}
Let us define the auxiliary kernel,
\begin{equation*}
\overline{k}\left( x,s\right) =\max\left\{ k\left( t,s\right) :\quad 0\leq t\leq x\right\} .
\end{equation*}
The kernel $\overline{k}$ verifies the condition (\ref{prop:szwarc}) and $k\leq\overline{k}$. Then, if $f$ is a positive function, $T_{kg}f\leq T_{\overline{k}g}f$. From Lemma \ref{lema:1}, it follows that $T_{\overline{k}g}f$ is an increasing function. Thus, for every $x$ where $T_{kg}f$ is defined, $T_{kg}f$ is bounded by $T_{\overline{k}g}f\left( x\right) $ on $[0,x]$.
\end{proof}

Taking into account Lemma \ref{lastlemma}, positive solutions for
equation (\ref{eq:vol1}) are \textit{bounded near zero}. Unless otherwise stated, any function considered in this paper
will be \textit{bounded near zero}.

A particular case of equation $\left( k,g\right) $ is the well known \textit{convolution equation},
\begin{equation}
  \label{eq:vol2}
  u\left( x\right) =\int_0^x\phi\left( x-s\right) g\left( u\left( s\right) \right) \,ds.
\end{equation}
Here, the kernel is $k\left( x,s\right) =\phi\left( x-s\right) $, being $\phi$ a locally bounded
function of one real variable. This kind of kernels are known as \textit{convolution kernels}.

The existence of a nontrivial solution for convolution equations is equivalent to the existence of a nontrivial
\textit{subsolution}; i.e., a function $v$ such that $v\leq T_{\phi g}v$ \cite{Ari00,AriCas98,BusOkra90,Grip81,Myd91,Zeid90}.
Moreover, if a positive solution of (\ref{eq:vol2}) exists, then it is unique, strictly increasing, continuous and a global attractor
of any positive and measurable function $f$ (see, for instance, \cite{Ari00,AriBen01,AriCas99,Szw92}. Recall that a solution is a global attractor of a positive measurable function $f$ if the sequence $(T^n_{\phi g}f)_{n\in \N }$ converges to that solution, where $T_{\phi g}^n$ denotes the composition of $T_{\phi g}$ with itself $n$ times.

Szwarc, in \cite{Szw92}, presented several results about existence, uniqueness, and attracting behaviour of solutions for nonconvolution Volterra integral equations. In that paper, the author uses different techniques and ideas which appear in many results concerning the existence, uniqueness, and attracting behaviour of solutions for convolution equations. Our aim in this paper is the same. That is, to study how the results known for the
convolution equation (\ref{eq:vol2}), can be used in order to
obtain properties for the solutions of the nonconvolution equation
(\ref{eq:vol1}). The hypotheses considered in this paper are weaker than those considered by Szwarc in \cite{Szw92}.

\section{Existence of solutions}
\label{sec2}

As mentioned above, for nonlinear Volterra integral equations of convolution type there is a strong relation between the existence of subsolutions and the existence of nontrivial solutions. First we will show that, also for
nonconvolution equations, the existence of solutions and the existence of subsolutions are equivalent.

Throughout this section, we will assume that equation $\left( k,g\right) $ verifies conditions (GC).

\begin{theorem}\label{teo:subsol}
  There is a solution for the equation (\ref{eq:vol1}) if and only if equation (\ref{eq:vol1}) admits a subsolution.
\end{theorem}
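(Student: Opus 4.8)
The direction ($\Rightarrow$) is immediate: if $u$ is a solution of (\ref{eq:vol1}) then $u=T_{kg}u$, hence $u\le T_{kg}u$, so $u$ is a subsolution. The content of the theorem is the converse, and the plan is to run the monotone iteration of $T_{kg}$ starting from the subsolution, the only real difficulty being to keep the iterates bounded; this control will come from comparison with a convolution equation, so that the known results for (\ref{eq:vol2}) can be imported.

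So let $v$ be a nontrivial subsolution, $v\le T_{kg}v$, defined on an interval $[0,a]$ on which (using Lemma \ref{lastlemma} and the standing conventions) $v$ is bounded and nontrivial. Since $g$ vanishes on $(-\infty,0]$ we have $T_{kg}v=T_{kg}v^{+}$ with $v^{+}=\max\{v,0\}$, so $v^{+}$ is again a subsolution and we may assume $v\ge 0$. By $\mathbf{K}_1$ the constant $M:=\sup\{k(x,s):0\le s\le x\le a\}$ is finite; let $\phi$ be the convolution kernel equal to $M$ on $[0,a]$. Then $k(x,s)\le\phi(x-s)$ for $0\le s\le x\le a$, and since $g\ge 0$ this gives $T_{kg}f\le T_{\phi g}f$ on $[0,a]$ for every positive $f$. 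In particular $v\le T_{kg}v\le T_{\phi g}v$, i.e. $v$ is a nontrivial subsolution of the convolution equation $(\phi,g)$. Invoking the results quoted in the Introduction for (\ref{eq:vol2}) — existence of a nontrivial solution is equivalent to existence of a nontrivial subsolution \cite{Ari00,AriCas98,BusOkra90,Grip81,Myd91,Zeid90}, and a positive solution is a global attractor \cite{Ari00,AriBen01,AriCas99,Szw92} — there is a positive solution $U$ of $(\phi,g)$ on $[0,a]$ with $T_{\phi g}^{n}v\to U$ pointwise; as $v$ is a subsolution, $(T_{\phi g}^{n}v)_n$ is nondecreasing, so $v\le U$ on $[0,a]$ and $U=T_{\phi g}U$.

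Now set $u_0=v$ and $u_{n+1}=T_{kg}u_n$. From $v\le T_{kg}v$ and the monotonicity of $T_{kg}$ (a consequence of $\mathbf{G}_1$), the sequence $(u_n)$ is nondecreasing. I claim $u_n\le U$ for every $n$: this holds for $n=0$, and if $u_n\le U$ then $u_{n+1}=T_{kg}u_n\le T_{kg}U\le T_{\phi g}U=U$. Hence for each $x\in[0,a]$ the sequence $(u_n(x))$ is nondecreasing and bounded above by $U(x)<\infty$; let $u_\infty(x)=\lim_n u_n(x)$, so that $v\le u_\infty\le U$ and $u_\infty$ is bounded on $[0,a]$. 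Fixing $x$, the integrands $k(x,s)g(u_n(s))$ increase to $k(x,s)g(u_\infty(s))$ by continuity of $g$, and are dominated by $k(x,s)g(\sup_{[0,a]}U)$, which is integrable in $s$ on $[0,x]$ by $\mathbf{K}_2$; monotone (or dominated) convergence then gives $T_{kg}u_n(x)\to T_{kg}u_\infty(x)$. Since the left side also tends to $u_\infty(x)$, we obtain $u_\infty=T_{kg}u_\infty$ on $[0,a]$, so $u_\infty$ is a solution of (\ref{eq:vol1}), and it is nontrivial because $u_\infty\ge v$.

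The main obstacle is precisely the uniform boundedness of the iterates $(T_{kg}^{n}v)_n$: $T_{kg}$ carries no a priori bound, so constructing a dominating function is unavoidable, and the natural tool is the convolution theory — hence the passage to the constant kernel $\phi\equiv M$ and the use of the equivalence ``subsolution $\Leftrightarrow$ solution'' together with the attractor property for (\ref{eq:vol2}). A secondary technical point is the choice of interval: one must ensure that the convolution solution $U$ (and thus $u_\infty$) is defined on the whole interval where $v$ lives, which is exactly what the cited convolution results provide; measurability of the $u_n$, needed for the convergence theorem, follows by induction since each $T_{kg}u_n$ is measurable by $\mathbf{K}_1$ and $\mathbf{K}_2$.
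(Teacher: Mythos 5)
Your proof is correct in substance, but it takes a genuinely different and heavier route than the paper. Both arguments reduce to the same skeleton: find a supersolution dominating the subsolution $v$, run the nondecreasing iteration $T_{kg}^n v$ underneath it, and pass to the limit by monotone convergence. The difference is how the supersolution is produced. The paper does it with a one-line computation: $v$ is bounded by a constant $M$ near zero (Lemma \ref{lastlemma}), and $T_{kg}M(x)=g(M)K(x)\to 0$ as $x\to 0^+$ by $\mathbf{K}_1$ and $\mathbf{K}_2$, so the constant $M$ itself is a supersolution of $(k,g)$ on some $[0,\delta]$ --- no convolution theory needed. You instead dominate $k$ by the constant convolution kernel $M=\sup k$ on the triangle, observe that $v$ is then a subsolution of $(\phi,g)$ with $\phi\equiv M$, and invoke the cited convolution results (subsolution implies solution, plus global attraction) to manufacture a solution $U$ of $(\phi,g)$ with $v\le U$, which serves as your supersolution. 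This anticipates the comparison-with-convolution technique the paper develops in the rest of Section 2, and when it applies it can yield a solution on a larger interval than $[0,\delta]$; but it imports nontrivial external machinery where an elementary bound suffices, and it leaves one loose end: the convolution solution $U$ need not be defined on all of $[0,a]$ (it may blow up before $a$ if $g$ grows fast), and this is not ``exactly what the cited convolution results provide'' as you assert. That gap is harmless for the statement --- a solution on some $[0,\delta]$ is all that is claimed, so you may simply shrink the interval --- but you should say so rather than attribute it to the references.
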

\begin{proof}
The sufficient condition is immediate, because every solution of equation (\ref{eq:vol1}) is a subsolution.

To prove the necessary condition, let us consider a positive subsolution of (\ref{eq:vol1}), $v$. First, we want to note that, by Lemma \ref{lastlemma}, subsolutions of (\ref{eq:vol1}) are necessarily \textit{bounded near zero}. So, there
exist positive $\delta_1$ and $M$, such that
\begin{equation}
\label{eq:des1}
v\leq M,\qquad \text{ on } [0,\delta_1].
\end{equation}
Now, we need to prove that $M$ is a \textit{supersolution near zero}, which is equivalent to prove the existence of a positive $\delta _2$, such that
\begin{equation}
\label{eq:des2}
T_{kg}M\leq M,\qquad \text{on }[0,\delta_2].
\end{equation}
Taking into account conditions $\mathbf{K}_1$ and $\mathbf{K}_2$, we have that $K\left( 0\right) =0$ and $\lim_{x\to 0+}K\left( x\right) =0$. Therefore, since $T_{kg}M\left( x\right) =g\left( M\right) K\left( x\right) $, the existence of $\delta_2$ is guaranteed.

Let us define $\delta=\min\left\{ \delta_1,\delta_2\right\} $. From
  (\ref{eq:des1}) and (\ref{eq:des2}), we have
  \begin{equation*}
    v\leq T_{kg}v\leq T_{kg}M\leq M,\qquad \text{on }[0,\delta].
  \end{equation*}
  Note that $(T^n_{kg}v)_{n\in \N }$ is a nondecreasing sequence
  bounded from above by $M$. Thus, we can define the pointwise limit
  \begin{equation*}
    u\left( x\right) :=\lim_{n\to \infty }T^n_{kg}v\left( x\right) ,\qquad \forall x\in [0,\delta].
  \end{equation*}
  For each $x\in [0,\delta]$, we consider the sequence $\left( \phi_n\right) _{n\in \N }$, where
  \begin{equation*}
    \phi_n\left( s\right) =k\left( x,s\right) g\left( T^n_{kg}v\left( s\right) \right) .
  \end{equation*}
  By the monotone convergence theorem, the function
\begin{equation*}
u\left( x\right) = \lim_{n\to \infty }\int_{\R }\phi _n ^x \left( s\right) \,ds
\end{equation*}
exists on $[0,\delta]$ and is a solution of equation (\ref{eq:vol1}).
\end{proof}

Note that the necessary condition of last lemma remains true when you assume just the existence of a subsolution \textit{near zero}, i.e., the existence of a function $v$ and a positive $\delta _0$ such that
\begin{equation*}
v\leq T_{kg}v,\qquad \text{ on }[0,\delta _0].
\end{equation*}
In this case, it only would be necessary to change, in the proof of the necessary condition, the definition of $\delta $; the new definition would be $\delta =\min \left\{ \delta _0,\delta _1,\delta _2\right\} $.

As we mentioned in the introduction, Volterra integral equations of convolution kind are a particular case of equation (\ref{eq:vol1}). There are many results about the existence and uniqueness of solutions for convolution Volterra integral equations \cite{Ari00,BusOkra90,Myd91,Zeid90,AriBen03a,Myd99}. Some of the foremost techniques to study Volterra integral equations are comparison techniques \cite{Zeid90,Ask91,Kar00}. The rest of this section is devoted to the use of such techniques in order to establish a relation between existence results for \textit{convolution equations} and for equation (\ref{eq:vol1}). To do it, we will need to show that any locally bounded kernel can be bounded from above and below by \textit{convolution kernels}, on every bounded region of $\R ^2$.

Since our interest is to relate equation (\ref{eq:vol1}) with \textit{Volterra integral equations of convolution kind}, at a first stage, it would be natural to consider kernels $k:\R^2\rightarrow \R^+$ verifying $k\left( x,s\right) =k\left( x+\lambda,s+\lambda\right) $ for all $\lambda \in \R $ and $\left( x,s\right) \in \R^2$. Such kernels will be referred to as \textit{invariant kernels}. Note that convolution kernels are invariant because there is a function $\phi:\R \rightarrow \R^+$ such that $k\left( x,s\right) =\phi\left( x-s\right) $. Next, we are going to see that any \textit{invariant kernel} is a \textit{convolution kernel}. Let $k$ be an invariant kernel, then
$k\left( x,s\right) =k\left( x-s,0\right) $, for all $\left( x,s\right) \in \R^2$; so defining $\phi\left( x\right) =k\left( x,0\right) $, we have
$k\left( x,s\right) =\phi\left( x-s\right) $. Thus, both families, invariant and convolution kernels are the same.

Now, let us consider a kernel $k$ satisfying $\mathbf{K}_1$, and let us
study equation (\ref{eq:vol1}) in an interval $[0,x_0]$, for a given $x_0>0$. First, we define a couple of auxiliary functions,
\begin{equation}\label{eq:func_min}
  \phi _{x_0}\left( x\right) =\min \left\{ k\left( \left( 1-\lambda \right) x+\lambda x_0,\lambda \left( x_0-x\right) \right) :
\lambda \in [0,1]\right\}
\end{equation}

\noindent and

\begin{equation}\label{eq:func_max}
    \psi_{x_0}\left( x\right) =\max \left\{ k\left( \left( 1-\lambda \right) x+\lambda x_0,\lambda\left( x_0-x\right) \right) :
\lambda \in[0,1]\right\}.
\end{equation}

Let $\mathcal{T}_{x_0}$ be the right triangle determined by $\left( 0,0\right) $,
$\left( x_0,0\right) $ and $\left( x_0,x_0\right) $. For every $x\in [0,x_0]$, $\phi\left( x\right) $ and
$\psi\left( x\right) $ are the minimum and the maximum, respectively, of $k$ on the
segment $l_x$, determined by the intersection of $\mathcal{T}_{x_0}$ and the graph of $y\left( s\right) =s-x$. So, we have
\begin{equation}
\label{eq9}
\phi_{x_0}\left( x_1-s_1\right) \leq k\left( x_1,s_1\right) \leq\psi_{x_0}\left( x_1-s_1\right) ,
\end{equation}
for any $\left( x_1,s_1\right) \in \mathcal{T}_{x_0}$, because $\left( x_1,s_1\right) $ is on $l_{x_1-s_1}$.

From (\ref{eq9}) and Theorem \ref{teo:subsol}, it
follows that the existence of a solution for a equation
$\left( \phi_{x_0},g\right) $ implies the existence of solutions for equation
$\left( k,g\right) $ and
$\left( \psi_{x_0},g\right) $. In general, the converse is not true. But if we assume the existence of a positive constant $c$ such that $\psi_{x_0}\leq c\phi_{x_0}$, the following inequalities hold,
\begin{equation*}
  \phi_{x_0}\left( x-s\right) \leq k\left( x,s\right) \leq c\phi_{x_0}\left( x-s\right) ;
\end{equation*}
and, therefore, by Theorem \ref{teo:subsol}, the existence of solutions for $\left( k,g\right) $ is
equivalent to the existence of solutions for $(\phi_{x_0},g)$. There are different cases in which such constant can be found. For instance, when
\begin{equation}
\label{cond10}
  \lim_{x\to 0^+}\frac{\psi_{x_0}\left( x\right) }{\phi_{x_0}\left( x\right) }=l\in[0,+\infty).
\end{equation}

What we have proved in the last part of this section is the following result.
\begin{theorem}
Let $\left( k,g\right) $ be a nonconvolution equation satisfying (GC), and let $\phi_{x_0}$ and $\psi_{x_0}$ be defined as in (\ref{eq:func_min}) and (\ref{eq:func_max}). Then, the existence of a solution for equation $\left( \phi_{x_0},g\right) $ implies the existence of a solution for equation $\left( k,g\right) $. Moreover, if condition(\ref{cond10}) holds, the equation $\left( k,g\right) $ has a solution if and only if either equation $\left( \phi_{x_0},g\right) $ or $\left( \psi_{x_0},g\right) $ have a solution.
\end{theorem}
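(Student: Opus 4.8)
The plan is to assemble the theorem from two ingredients already developed in the text: the pointwise sandwich inequality~(\ref{eq9}), valid on the triangle $\mathcal{T}_{x_0}$, and the existence-via-subsolution equivalence of Theorem~\ref{teo:subsol}. First I would prove the first assertion. Suppose $(\phi_{x_0},g)$ has a (nontrivial) solution $w$ on $[0,x_0]$; then $w$ is in particular a subsolution of $(\phi_{x_0},g)$, i.e.\ $w\le T_{\phi_{x_0}g}w$. Since every pair $(x,s)$ with $0\le s\le x\le x_0$ lies in $\mathcal{T}_{x_0}$ and satisfies $x=x_1$, $s=s_1$ with $x_1-s_1=x-s$, inequality~(\ref{eq9}) gives $\phi_{x_0}(x-s)\le k(x,s)$; together with $\mathbf{G}_1$ (so $g\ge 0$) this yields $T_{\phi_{x_0}g}w\le T_{kg}w$ on $[0,x_0]$, hence $w\le T_{kg}w$ there. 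Thus $w$ is a subsolution \emph{near zero} of $(k,g)$, and by the remark following Theorem~\ref{teo:subsol} (the version of the necessary condition that only needs a subsolution on some $[0,\delta_0]$), equation $(k,g)$ has a solution.

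For the equivalence under hypothesis~(\ref{cond10}), the key observation is that the limit condition produces a global comparison constant on a neighbourhood of the origin. Since $\lim_{x\to 0^+}\psi_{x_0}(x)/\phi_{x_0}(x)=l<+\infty$, there is $\eta>0$ and a constant $c>0$ with $\psi_{x_0}(x)\le c\,\phi_{x_0}(x)$ for $x\in(0,\eta]$; combining this with~(\ref{eq9}) gives, for $(x,s)$ in $\mathcal{T}_{x_0}$ with $x-s\le\eta$,
\begin{equation*}
  \phi_{x_0}(x-s)\le k(x,s)\le\psi_{x_0}(x-s)\le c\,\phi_{x_0}(x-s).
\end{equation*}
Now I would run Theorem~\ref{teo:subsol} both ways on a suitably small interval. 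If $(\psi_{x_0},g)$ has a solution $w$, then as above $w\le T_{\psi_{x_0}g}w\le T_{cg}w$ near zero; but $T_{cg}$ is, modulo the harmless scaling of $g$, of the same convolution type, so existence of a solution of $(c\phi_{x_0},g)$ is equivalent to existence of a solution of $(\phi_{x_0},g)$ (this is the standard convolution fact recalled in the introduction, since $c\phi_{x_0}$ is again a convolution kernel). Going the other way, a solution of $(\phi_{x_0},g)$ gives a solution of $(k,g)$ by the first part, and a solution of $(k,g)$ restricted near zero is a subsolution of $(\psi_{x_0},g)$ because $k\le\psi_{x_0}$ by~(\ref{eq9}); Theorem~\ref{teo:subsol} then supplies a solution of $(\psi_{x_0},g)$. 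Chaining these implications closes the cycle $(\phi_{x_0},g)\Rightarrow(k,g)\Rightarrow(\psi_{x_0},g)\Rightarrow(\phi_{x_0},g)$, which is exactly the claimed equivalence.

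The main obstacle I anticipate is bookkeeping of the intervals and of the ``near zero'' qualifier: the comparison $\psi_{x_0}\le c\phi_{x_0}$ is only available on $(0,\eta]$, the subsolutions produced by Theorem~\ref{teo:subsol} live only on some $[0,\delta]$, and the solutions of the convolution equations are a priori local as well. I would handle this by fixing $x_0$ from the start, working throughout on the interval $[0,\min\{x_0,\eta,\delta\}]$, and invoking the remark after Theorem~\ref{teo:subsol} each time I only have a subsolution near zero rather than on all of $[0,x_0]$; once a local solution of $(k,g)$ is obtained it extends by the usual continuation argument for Volterra equations (or one simply states the conclusion locally, as the rest of the section does). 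A secondary point to be careful about is the scaling argument for $(c\phi_{x_0},g)$ versus $(\phi_{x_0},g)$: strictly one should note that $v$ is a subsolution of $(c\phi_{x_0},g)$ iff it is a subsolution of $(\phi_{x_0},g)$ up to replacing $g$ by $cg$, which still satisfies $\mathbf{G}_1$, so Theorem~\ref{teo:subsol} applies verbatim; this is routine but should be stated to keep the argument honest.
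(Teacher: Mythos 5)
Your proposal is correct and follows essentially the same route as the paper, which also derives the theorem directly from the sandwich inequality~(\ref{eq9}) together with the subsolution--solution equivalence of Theorem~\ref{teo:subsol} and the constant $c$ with $\psi_{x_0}\leq c\,\phi_{x_0}$ supplied by condition~(\ref{cond10}); indeed you are more careful than the paper about the interval bookkeeping and the ``near zero'' qualifier. The only soft spot --- passing from solvability of $\left( c\phi_{x_0},g\right) $ (equivalently $\left( \phi_{x_0},cg\right) $) to solvability of $\left( \phi_{x_0},g\right) $, which your rescaling remark rephrases but does not by itself close and which ultimately rests on the cited convolution-equation existence criteria being insensitive to constant multiples of the kernel --- is present in exactly the same form in the paper's own argument.
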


Let us see a couple of examples about how to use the techniques described in this section to prove the existence of solutions for $\left( k,g\right) $.

\noindent\subparagraph{Example 1.}
Let the equation $\left( k,g\right) $ be
\begin{equation}\label{eq:ex1}
u\left( x\right) =\int_0^x\left( a^{x+s}+1\right) \sqrt{2u\left( s\right) }\,ds,\qquad a>0,\quad x\geq 0.
\end{equation}

Let us consider an arbitrary positive constant $x_0>0$, and restrict the problem to the interval $[0,x_0]$.
Consider the triangle
\begin{equation*}
\mathcal{T}_{x_0}=\left\{ \left( x,s\right) \in\R^2:0\leq x\leq x_0,\,0\leq s\leq x\right\} .
\end{equation*}

Since the kernel is increasing with respect both variables, the functions $\phi$ and $\psi$, defined in (\ref{eq:func_min}) and (\ref{eq:func_max}), are $\phi\left( x\right) =a^x+1$ and $\psi\left( x\right) =a^{2x_0-x}+1$.

We also have
\begin{equation*}
\lim_{x\to 0^+}\frac{\psi\left( x\right) }{\phi\left( x\right) }=\frac{a^{2x_0}+1}{2}\in [0,+\infty );
\end{equation*}
thus, condition (\ref{cond10}) holds, and therefore, the existence of solutions for equation (\ref{eq:ex1}) is equivalent to the existence of a solution for the equation
\begin{equation}\label{eq:convex1}
u\left( x\right) =\int _0^x \left( a^{x-s}+1\right) \sqrt{2u\left( s\right) }\,ds,\qquad x\in[0,x_0].
\end{equation}
It can be easily checked that (\ref{eq:convex1}) verifies some conditions for the existence of solutions for convolution equations given in \cite{Ari00}. Hence, the nonconvolution equation $\left( k,g\right) $ has a solution.

\noindent\subparagraph{Example 2.} Let the equation $\left( k,g\right) $ be
\begin{equation*}
u\left( x\right) =\int _0^x x\left( x-s\right) u\left( s\right) ^{\beta }\, ds,\qquad x\in [0,L],\quad \beta \in \left( 0,1\right) .
\end{equation*}

In this case, the kernel is $k\left( x,y\right) = x\left( x-y\right) $. As in the last example, it is possible to find the expressions of the functions $\phi $ and $\psi $ when we restrict the problem to the region $\mathcal{T}_{x_0}$. Here, we have $\phi \left( x\right) =x^2$ and $\psi \left( x\right) =x_0x$. For such functions, we find that
\begin{equation*}
\lim _{x\to 0^+}\frac{\psi\left( x\right) }{\phi \left( x\right) }=\lim_{x\to 0^+}\frac{x_0}{x}=+\infty .
\end{equation*}
Hence, condition (\ref{eq:convex1}) does not hold. Nevertheless, it is immediate to check that both equations, $\left( x^2,x^\beta \right) $ and $\left( x_0x,x^\beta \right) $ have a solution. Indeed, it is possible to obtain the solutions in closed form. The functions
\begin{equation*}
u_{\phi }\left( x\right) =B\left( 3,\frac{3\beta +1}{1-\beta }\right) ^{1/\left( 1-\beta \right) } x^{3/\left( 1-\beta \right) }
\end{equation*}
and
\begin{equation*}
u_{\psi }\left( x\right) =\left( x_0B\left( 2,\frac{2\beta +1}{1-\beta }\right) \right)^{1/\left( 1-\beta \right) } x^{2/\left( 1-\beta \right) }
\end{equation*}
are the solutions for equations $\left( x^2,x^\beta \right) $ and $\left( x_0x,x^\beta \right) $ respectively. Therefore, every solution for equation $\left( k,g\right) $ lies between $u_{\phi }$ and $u_{\psi }$.

\section{Uniqueness}
For convolution equations with locally bounded kernels, under very weak assumptions, nontrivial solutions are unique, see \cite{AriBen01}. Our aim in this section is to prove the uniqueness of nontrivial solutions for nonconvolution equations. To do it, we will consider the following additional hypotheses on the kernel.

\begin{enumerate}
\item[$\mathbf{K}_3$.] The function $K\left( x\right) =\int _0^x k\left( x,s\right) \, ds$ is continuous.
\item[$\mathbf{K}_4$.] For every $\left( x,s\right) \in \R ^2$ and $\lambda \geq 0$, $k\left( x,s\right) \leq k\left( x+\lambda ,s+\lambda \right) $.
\end{enumerate}

\begin{lemma}\label{lemma:2}
Let us suppose that, in addition to (GC), equation $\left( k,g\right) $ also verifies $\mathbf{K}_3$. Then, the operator $T_{kg}$ transforms
  bounded functions into continuous functions.
\end{lemma}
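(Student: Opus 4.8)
The plan is to bound the oscillation of $T_{kg}f$ by an oscillation of $K$, which is continuous by $\mathbf{K}_3$. So I would first fix a bounded function $f$, say $|f|\le M$ on the interval $[0,x_0]$ where it is considered; by $\mathbf{G}_1$ the map $g$ is increasing and vanishes on $(-\infty,0]$, hence $0\le g(f(s))\le g(M)$ for every $s$. Since $\mathbf{K}_1$ makes $k$ locally bounded, I may also fix a constant $C$ with $k\le C$ on the triangle $\mathcal{T}_{x_0}$, so that $k(x,s)\le C$ for all $x\le x_0$ and all $s$.

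For $x<y$ in $[0,x_0]$, I would use $\mathbf{K}_1$ (that is, $k(x,s)=0$ for $s>x$) to write
\[
T_{kg}f(y)-T_{kg}f(x)=\int_0^x\bigl(k(y,s)-k(x,s)\bigr)g(f(s))\,ds+\int_x^y k(y,s)\,g(f(s))\,ds .
\]
The second integral is at most $g(M)\,C\,(y-x)$, which tends to $0$ as $y\to x$. For the first integral, observe that its analogue without the factor $g(f(s))$ equals $\int_0^x\bigl(k(y,s)-k(x,s)\bigr)\,ds=\bigl(K(y)-K(x)\bigr)-\int_x^y k(y,s)\,ds$, which tends to $0$ by $\mathbf{K}_3$ and by the bound just obtained; so the \emph{unweighted} oscillation behaves as desired. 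Treating the left limit $y\to x^{-}$ symmetrically would then give continuity at $x$, and, since $x_0$ is arbitrary, continuity of $T_{kg}f$ on its whole domain.

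The step I expect to be the real obstacle is passing from the continuity of $K$ to the continuity of the \emph{weighted} integral $x\mapsto\int_0^x k(x,s)g(f(s))\,ds$: one cannot pull $g(f(s))$ out of the integral, and $\mathbf{K}_3$ only controls the signed quantity $\int_0^x\bigl(k(y,s)-k(x,s)\bigr)\,ds$, not the total variation $\int_0^x\bigl|k(y,s)-k(x,s)\bigr|\,ds$ that a crude absolute-value estimate would require. To handle this I would approximate $g\circ f$ in $L^1[0,x_0]$ by a step function $h=\sum_j c_j\mathbf{1}_{J_j}$ with finitely many intervals $J_j$; since $k\le C$ on $\mathcal{T}_{x_0}$, the error $\int_0^x k(x,s)\bigl|g(f(s))-h(s)\bigr|\,ds\le C\,\|g\circ f-h\|_{L^1[0,x_0]}$ is small uniformly in $x\in[0,x_0]$, and for a step function the problem reduces to the continuity of $x\mapsto\int_{J_j\cap[0,x]}k(x,s)\,ds$ for a single interval $J_j$. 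Establishing that last continuity is the crux; I would attack it with $\mathbf{K}_3$ together with the monotone majorant $\overline{k}$ of Lemma~\ref{lastlemma}, estimating $\int_{J_j\cap[0,x]}k(x,s)\,ds$ from above by the corresponding integral of $\overline{k}$ — which, satisfying (\ref{prop:szwarc}), is monotone in $x$ — and from below by $K$-type quantities, so that the squeeze forces the desired continuity. Once this is in place, the estimates of the previous paragraph give $|T_{kg}f(y)-T_{kg}f(x)|\to0$ and the lemma follows.
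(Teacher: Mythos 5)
You have correctly located the difficulty: $\mathbf{K}_3$ controls only the total mass $K(y)-K(x)$, not how that mass is distributed in $s$, so one cannot simply pull $g(f(s))$ out of $\int_0^x\bigl(k(y,s)-k(x,s)\bigr)g(f(s))\,ds$. But your proposed repair does not close the gap. The step you yourself call the crux --- continuity of $x\mapsto\int_{J\cap[0,x]}k(x,s)\,ds$ for a fixed subinterval $J$ --- is left as a ``squeeze'' between $\int_{J}\overline{k}(x,s)\,ds$ (which is merely monotone in $x$, hence may itself jump) and an unspecified lower bound; these bounds do not pinch together, and no argument can make them do so, because the claim is false under the stated hypotheses. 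Take for instance $k(x,s)=2\cdot\mathbf{1}_{\{0\le s<x/2\}}$ for $0\le x<1$ and $k(x,s)=2\cdot\mathbf{1}_{\{x/2\le s\le x\}}$ for $x\ge 1$ (zero elsewhere): then $K(x)=x$ is continuous and strictly increasing, so $\mathbf{K}_1$, $\mathbf{K}_2$, $\mathbf{K}_3$ all hold, yet $\int_0^{1/4}k(x,s)\,ds$ jumps from $1/2$ to $0$ at $x=1$, and $T_{kg}f$ is discontinuous at $x=1$ for suitable bounded positive $f$. So the $L^1$-approximation route merely pushes the same obstruction one level down, where it becomes insurmountable.

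The ingredient that is actually needed is monotonicity of $k$ in its first variable, i.e.\ condition (\ref{prop:szwarc}) of Lemma \ref{lema:1}. Once $k(x_1,s)\le k(x_2,s)$ for $x_1\le x_2$, the difference $k(x_2,s)-k(x_1,s)$ is nonnegative and one gets in two lines $0\le T_{kg}f(x_2)-T_{kg}f(x_1)=\int_0^{x_2}\bigl(k(x_2,s)-k(x_1,s)\bigr)g(f(s))\,ds\le g(M)\bigl(K(x_2)-K(x_1)\bigr)$, whence continuity follows from $\mathbf{K}_3$. This is precisely the paper's argument, which uses the sign condition tacitly: its inequality $\int_0^{x_2}\bigl(k(x_2,s)-k(x_1,s)\bigr)g(f(s))\,ds\le g(M)\int_0^{x_2}\bigl(k(x_2,s)-k(x_1,s)\bigr)\,ds$ is invalid without it. In short, you diagnosed an obstruction that the paper's proof glosses over, but your plan cannot be completed from $\mathbf{K}_3$ alone; the lemma requires (\ref{prop:szwarc}) (or some substitute controlling $\int_0^{x_2}\lvert k(x_2,s)-k(x_1,s)\rvert\,ds$), and once that is granted the step-function machinery is unnecessary.
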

\begin{proof}
Let $f$ be a positive function bounded from above by $M$. Let
 $x_1\leq x_2$, then, since $k\left( x,s\right) =0$ whenever $s>x$, we have
\begin{eqnarray*}
T_{kg}f\left( x_2\right) -T_{kg}f\left( x_1\right) & = & \int_0^{x_2}k\left( x_2,s\right) g\left( f\left( s\right) \right) \,ds-\int_0^{x_1}
k\left( x_1,s\right) g\left( f\left( s\right) \right) \,ds \\
& = & \int_0^{x_2}\left( k\left( x_2,s\right) -k\left( x_1,s\right) \right) g\left( f\left( s\right) \right) \,ds \\
& \leq & g\left( M\right) \int_0^{x_2}k\left( x_2,s\right) -k\left( x_1,s\right) \,ds \\
& = & g\left( M\right) \left( K\left( x_2\right) -K\left( x_1\right) \right) . \\
\end{eqnarray*}
The continuity of $T_{kg}f$ is immediate from the continuity of $K$.
\end{proof}

The next corollary is followed from Lemma \ref{lastlemma} and the last result.

\begin{corollary}\label{cor:1}
Every solution of equation $\left( k,g\right) $ is a continuous function.
\end{corollary}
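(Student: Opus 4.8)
The plan is to combine the two preceding results after localizing to an arbitrary compact interval, since continuity is a local property. Let $u$ be a solution of $\left( k,g\right) $, so that $u=T_{kg}u$ on its whole domain. Note first that $u$ is necessarily a positive function: by $\mathbf{G}_1$ the nonlinearity $g$ is nonnegative (it is increasing and vanishes on $(-\infty,0]$), and $k\geq 0$ by $\mathbf{K}_1$, so $T_{kg}$ sends any function to a nonnegative one; hence $u=T_{kg}u\geq 0$.

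Fix $x_0>0$. First I would apply Lemma \ref{lastlemma} to the positive function $u$ to conclude that $T_{kg}u=u$ is bounded on $[0,x_0]$, say by a constant $M$. Next I would observe that, by $\mathbf{K}_1$ (i.e. $k\left( x,s\right) =0$ for $s>x$), the value $T_{kg}u\left( x\right) $ for $x\in[0,x_0]$ depends only on the values $u\left( s\right) $ with $s\leq x\leq x_0$; therefore, on $[0,x_0]$ the fixed-point identity can be rewritten as $u=T_{kg}\bigl(u|_{[0,x_0]}\bigr)$, where $u|_{[0,x_0]}$ is now a genuinely bounded (by $M$) positive function. Then Lemma \ref{lemma:2} — which is available here since this section assumes the extra hypothesis $\mathbf{K}_3$ — applies and shows that $T_{kg}\bigl(u|_{[0,x_0]}\bigr)$ is continuous on $[0,x_0]$, that is, $u$ is continuous on $[0,x_0]$. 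Since $x_0>0$ was arbitrary, $u$ is continuous on all of its domain.

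I do not expect a genuine obstacle in this argument. The only subtlety worth stating explicitly is that Lemma \ref{lemma:2} is phrased for \emph{bounded} functions, whereas a solution is a priori only \textit{bounded near zero} (in fact locally bounded); the passage to the arbitrary compact interval $[0,x_0]$, together with the bound supplied by Lemma \ref{lastlemma} on that interval, is precisely what bridges this gap.
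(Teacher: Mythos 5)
Your argument is correct and is exactly the derivation the paper intends: the corollary is stated there as following from Lemma \ref{lastlemma} (local boundedness of $T_{kg}u=u$) combined with Lemma \ref{lemma:2} (continuity of $T_{kg}$ applied to bounded functions). Your extra care about positivity of $u$ and about localizing to $[0,x_0]$ only makes explicit what the paper leaves implicit.
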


The proof of the next lemma has been adapted from a paper due to Mydlardzyc \cite{Myd99}, where a similar result was proved for Abel integral equations. Here, we have used the ideas presented in \cite{Myd99}, and extended them to nonconvolution equations.

\begin{lemma}\label{lemma:3}
Let us suppose that, in addition to (GC), equation $\left( k,g\right) $ also verifies $\mathbf{K}_3$ and $\mathbf{K}_4$. Then, every continuous subsolution of equation $\left( k,g\right) $ is bounded from above by any solution of equation $\left( k,g\right) $.
\end{lemma}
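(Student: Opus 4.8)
The plan is to argue by contradiction, isolating the first instant at which a subsolution could overtake a solution and localising the equation there. Let $u$ be a solution of $(k,g)$, hence continuous by Corollary \ref{cor:1}, and let $v$ be a continuous subsolution; both are bounded near zero, and since $g\geq0$ we have $u=T_{kg}u\geq0$, so it suffices to compare $u$ and $v$ on the set where $v\geq0$. Suppose, for a contradiction, that $v(b)>u(b)$ for some $b$ in the common interval of definition. Since $u$ and $v$ are continuous and $v(0)\leq T_{kg}v(0)=0=u(0)$, the set $Z=\{x\in[0,b]:v(x)\leq u(x)\}$ is closed and contains $0$; put $t^{\ast}=\max Z$. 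Then $v(t^{\ast})=u(t^{\ast})=:c\geq0$ and $v(x)>u(x)$ for every $x\in(t^{\ast},b]$.

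Next I would localise near $t^{\ast}$. Splitting the defining integral of $u$ over $[0,t^{\ast}]\cup[t^{\ast},x]$ and substituting $s=t^{\ast}+\sigma$ on the second piece, and doing the same in the subsolution inequality for $v$, one obtains for $x\in[0,b-t^{\ast}]$, with $U(x):=u(t^{\ast}+x)$, $V(x):=v(t^{\ast}+x)$ and $\widetilde{k}(x,\sigma):=k(t^{\ast}+x,t^{\ast}+\sigma)$,
\[
U(x)=P(x)+\int_{0}^{x}\widetilde{k}(x,\sigma)\,g(U(\sigma))\,d\sigma,\qquad V(x)\leq Q(x)+\int_{0}^{x}\widetilde{k}(x,\sigma)\,g(V(\sigma))\,d\sigma,
\]
where $P(x)=\int_{0}^{t^{\ast}}k(t^{\ast}+x,s)g(u(s))\,ds$, $Q(x)=\int_{0}^{t^{\ast}}k(t^{\ast}+x,s)g(v(s))\,ds$, and $Q\leq P$ because $v\leq u$ on $[0,t^{\ast}]$ and $g$ is increasing. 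Subtracting gives the key difference inequality
\[
0<V(x)-U(x)\leq\int_{0}^{x}\widetilde{k}(x,\sigma)\,\bigl(g(V(\sigma))-g(U(\sigma))\bigr)\,d\sigma,\qquad x\in(0,b-t^{\ast}].
\]
Hypothesis $\mathbf{K}_{4}$ is what makes this localisation effective: applied with suitable shifts it sandwiches the shifted kernel, on a small triangle, between two convolution kernels built explicitly from $k$, i.e.\ $\phi(x-\sigma)\leq\widetilde{k}(x,\sigma)\leq\psi(x-\sigma)$; local boundedness of $k$ ($\mathbf{K}_{1}$) gives $\int_{0}^{x}\widetilde{k}(x,\sigma)\,d\sigma\to0$ as $x\to0^{+}$; and $P(0)=\int_{0}^{t^{\ast}}k(t^{\ast},s)g(u(s))\,ds=u(t^{\ast})=c$.

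With these ingredients I would close the argument on a small interval $(0,\eta]$. The integrand $g(V(\sigma))-g(U(\sigma))$ is continuous, nonnegative on $(0,\eta]$, and tends to $0$ as $\sigma\to0^{+}$; combining the difference inequality with the convolution sandwich reduces the matter to a comparison near zero for a Volterra equation of convolution type whose forcing term takes the value $c$ at the origin, for which the required uniqueness is available from the convolution theory recalled in Section \ref{sec2} and in \cite{AriBen01}. One then gets $V\leq U$ on some $(0,\eta']$, i.e.\ $v(y)\leq u(y)$ for $y\in(t^{\ast},t^{\ast}+\eta']$, contradicting the maximality of $t^{\ast}$; hence $v\leq u$ throughout. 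It is natural to split this endgame into the cases $c>0$ — where $U$ and $V$ stay in a compact subinterval of $(0,\infty)$ near $t^{\ast}$ and $g$ is well behaved there — and the more delicate $c=0$.

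The step I expect to be the main obstacle is precisely this last one. Since $g$ is assumed only continuous and increasing, not Lipschitz, the difference inequality cannot be closed by a Gronwall estimate on $V-U$: the ratio $\bigl(g(V(\sigma))-g(U(\sigma))\bigr)/\bigl(V(\sigma)-U(\sigma)\bigr)$ may be unbounded as $\sigma\to0^{+}$. The remedy is not to estimate the difference directly but to compare $U$ and $V$ near $t^{\ast}$ with the unique solution of an auxiliary convolution equation, exploiting the monotonicity of $T_{kg}$ and the continuity of $K$ at $t^{\ast}$ ($\mathbf{K}_{3}$); this is the device of \cite{Myd99}, introduced there for Abel equations, whose adaptation to the nonconvolution case is the content of this lemma. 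Finally, once $v\leq u$ is known for an arbitrary solution $u$, uniqueness follows: if $u_{1},u_{2}$ are solutions then, being continuous subsolutions (Corollary \ref{cor:1}), each lies below the other, so $u_{1}=u_{2}$.
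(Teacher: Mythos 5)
There is a genuine gap, and you have in fact located it yourself: the entire difficulty of the lemma is concentrated in the ``endgame'' at the first contact point $t^{\ast}$, and your proposal does not carry that step out. After isolating $t^{\ast}$ with $v(t^{\ast})=u(t^{\ast})=c$ and deriving the difference inequality $0<V-U\leq\int_{0}^{x}\widetilde{k}(x,\sigma)\left(g(V(\sigma))-g(U(\sigma))\right)d\sigma$, nothing in the hypotheses prevents $V$ from peeling off from $U$ at a point of contact: $g$ is only continuous and increasing, so no Gronwall or Osgood mechanism is available, and the appeal to ``uniqueness from the convolution theory'' does not apply, because the results recalled in Section \ref{sec2} and in \cite{AriBen01} concern the unforced equation $u=T_{\phi g}u$ with $u(0)=0$, not a forced equation $u=P+T_{\phi g}u$ with $P(0)=c$. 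The case $c=0$, which you defer as ``more delicate,'' is precisely the hard case, and deferring it to ``the device of \cite{Myd99}, whose adaptation is the content of this lemma'' is circular. Moreover, $\mathbf{K}_4$ gives only a one-sided bound $\widetilde{k}(x,\sigma)\geq k(x-\sigma,0)$ along diagonals, not the two-sided convolution sandwich your localisation relies on.

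The paper's proof avoids the first-contact analysis altogether by a shift trick that manufactures a strict buffer. For $c>0$ one defines $v_{c}(x)=0$ on $[0,c]$ and $v_{c}(x)=v(x-c)$ for $x>c$, and uses $\mathbf{K}_4$ (via the change of variable $t=s+c$ and $k(x-c,t-c)\leq k(x,t)$) to show $v_{c}$ is still a subsolution. Now $v_{c}\equiv 0<u$ on $(0,c)$, so at any first crossing point $x_{0}>c$ one gets $u(x_{0})-v_{c}(x_{0})\geq\int_{0}^{c}k(x_{0},s)g(u(s))\,ds>0$, a contradiction that requires no regularity of $g$ beyond monotonicity; letting $c\to 0^{+}$ gives $v\leq u$. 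This initial dead zone, absent from your construction, is exactly what replaces the Lipschitz/Osgood condition you correctly note is missing. Your final remark that uniqueness follows once the lemma is proved is correct, but it is the subsequent theorem, not part of this proof.
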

\begin{proof}
Let $v$ and $u$ be a subsolution and a solution of equation $\left( k,g\right) $,
 respectively. First, we will show that, for every $c>0$, the function
\begin{equation*}
v_c\left( x\right) =\begin{cases}
0,&\text{if }x\in [0,c]\\
v\left( x-c\right) ,&\text{if } x>c, \\
       \end{cases}
\end{equation*}
is also a subsolution of equation $\left( k,g\right) $.
For $x\in [0,c]$, this is trivial since $v_c\left( x\right) =T_{kg}v_c\left( x\right) =0$. For $x>c$,
\begin{equation}
\label{eq14}
v_c\left( x\right) =v\left( x-c\right) \leq T_{kg}v\left( x-c\right) =\int_0^{x-c}k\left( x-c,s\right) g\left( v\left( s\right) \right) \,ds.
\end{equation}
Since $k$ verifies $\mathbf{K}_4$, making the change of variable $t=s+c$ in the last integral, (\ref{eq14}) takes the form
\begin{eqnarray*}
  v_c\left( x\right) & \leq & \int _c^x k\left( x-c,t-c\right) g\left( v\left( t-c\right) \right) \,dt \\
& \leq & \int _c^x k\left( x,t\right) g\left( v_c\left( t\right) \right) \, dt=\int _0^x k\left( x,t\right) g\left( v_c\left( t\right) \right) \, dt = T_{kg}v_c\left( x\right) .\\
\end{eqnarray*}
Now, let us compare $v_c$ and $u$. For $0<x<c$, it is obvious that
$0=v_c\left( x\right) <u\left( x\right) $. Since $v_c$ and $u$ are continuous, there exists an interval
$[0,x_0)$, with $x_0>c$, where $v_c\leq u$. Then,
\begin{eqnarray*}
u\left( x_0\right) -v_c\left( x_0\right) & \geq & \int _0^{x_0} k\left( x_0,s\right) \left[ g\left( u\left( s\right) \right) -g\left( v_c\left( s\right) \right) \right] \, ds \\
& > & \int _0^c k\left( x_0,s\right) \left[ g\left( u\left( s\right) \right) -g\left( v_c\left( s\right) \right) \right] \, ds \\
& = & \int _0^c k\left( x_0,s\right) g\left( u\left( s\right) \right) \, ds>0.
\end{eqnarray*}
Analogously, it can be assured that $v_c<u$ in the whole domain of the solution.

Finally, since $v_c<u$ for every positive $c$, taking limits as $c\to 0^+$, we obtain that $v\leq u$.
\end{proof}

A consequence of this Lemma is the uniqueness of positive solutions for the
 equation $\left( k,g\right) $.
 
\begin{theorem}
Under the hypotheses of Lemma \ref{lemma:3}, equation $\left( k,g\right) $ has at most
 one positive solution.
\end{theorem}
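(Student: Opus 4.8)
The plan is to derive this directly from Lemma \ref{lemma:3} by a symmetry argument. Suppose, for contradiction or simply for comparison, that $u_1$ and $u_2$ are both positive solutions of $\left( k,g\right) $. The key observation is that a solution is in particular a subsolution: since $u_i = T_{kg}u_i$, we trivially have $u_i \leq T_{kg}u_i$. Moreover, by Corollary \ref{cor:1}, every solution of $\left( k,g\right) $ is continuous, so each $u_i$ is in fact a \emph{continuous} subsolution, which is exactly the hypothesis required to invoke Lemma \ref{lemma:3}.

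So the argument would proceed as follows. First, regard $u_1$ as a continuous subsolution and $u_2$ as a solution; Lemma \ref{lemma:3} then yields $u_1 \leq u_2$ on the common domain. Second, interchange the roles: regard $u_2$ as a continuous subsolution and $u_1$ as a solution, so that Lemma \ref{lemma:3} gives $u_2 \leq u_1$. Combining the two inequalities forces $u_1 = u_2$, which establishes that there is at most one positive solution.

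I do not expect any real obstacle here, since the statement is essentially a corollary of Lemma \ref{lemma:3} once one notes the two elementary facts above (solutions are subsolutions, and solutions are continuous by Corollary \ref{cor:1}). The only point worth stating carefully is that both solutions are a priori \emph{bounded near zero} — guaranteed by Lemma \ref{lastlemma} — and continuous on their whole domain of definition, so that Lemma \ref{lemma:3} applies on the entire interval and the conclusion $u_1=u_2$ holds globally rather than merely near zero.
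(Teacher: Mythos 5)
Your proposal is correct and is essentially identical to the paper's own proof: both regard each solution as a continuous subsolution (via Corollary \ref{cor:1}), apply Lemma \ref{lemma:3} twice with the roles of $u_1$ and $u_2$ interchanged, and conclude $u_1\equiv u_2$. Your extra remarks about boundedness near zero and the domain of validity are harmless refinements of the same argument.
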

\begin{proof}
Since every solution can be considered as a special
case of continuous subsolution, this proof is trivial.

Let $u_1$ and $u_2$ be two solutions of $\left( k,g\right) $.
Considering $u_1$ as a continuous subsolution, by Lemma \ref{lemma:3} $u_1\leq u_2$;
and considering $u_2$ as a continuous subsolution, $u_2\leq u_1$, so
$u_1\equiv u_2$.
\end{proof}

\section{Attracting behavior}

In this section, we are going to study
the attracting behaviour of the solutions for
the equations $\left( k,g\right) $ verifying conditions (GC).

Recall that in Section \ref{sec2}, for a kernel $k$ satisfying $\mathbf{K}_1$, there were defined the functions
\begin{equation*}
  \phi _{x_0}\left( x\right) =\min \left\{ k\left( \left( 1-\lambda \right) x+\lambda x_0,\lambda \left( x_0-x\right) \right) :
\lambda \in [0,1]\right\}
\end{equation*}

\noindent and

\begin{equation*}
    \psi_{x_0}\left( x\right) =\max \left\{ k\left( \left( 1-\lambda \right) x+\lambda x_0,\lambda\left( x_0-x\right) \right) :
\lambda \in[0,1]\right\}.
\end{equation*}
In that section, nonconvolution equations $\left( k,g\right) $ were studied in some arbitrary interval $[0,x_0]$ using the auxiliary convolution equations $\left( \phi _{x_0},g\right) $ and $\left( \psi _{x_0},g\right) $.

In order to simplify the notation, unless otherwise stated, $\phi _{x_0}$ and $\psi _{x_0}$ will be referred to as $\phi $ and $\psi $, respectively.

The proofs of the results presented in this section are mainly based on the attracting character of the solutions for the equations $\left( \phi ,g\right) $ and $\left( \psi ,g\right) $, and some standard comparison techniques. Throughout this section, we will assume the existence of solutions for equations $\left( \phi ,g\right) $ and $\left( \psi ,g\right) $, that will be denoted by $u_{\phi }$ and $u_{\psi }$ respectively.

Note that $u_{\phi }$ and $u_{\psi }$ are unique and global attractors of all positive and measurable functions (see \cite{AriBen01}). Moreover, as we saw in Section \ref{sec2}, from (\ref{eq9}) and Theorem \ref{teo:subsol}, the existence of a solution for an equation $\left( \phi ,g\right) $ implies the existence of solutions for $\left( k,g\right) $. These solutions are comparable functions, as we will see in the next result.

\begin{lemma}\label{prop:u<upsi}
Let $u$ be a solution of the nonconvolution equation $\left( k,g\right) $. Then  $u_{\phi }\leq u\leq u_{\psi }$.
\end{lemma}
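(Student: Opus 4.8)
The plan is to exploit the pointwise kernel inequalities~(\ref{eq9}), namely $\phi(x-s)\le k(x,s)\le\psi(x-s)$ on the triangle $\mathcal{T}_{x_0}$, together with the monotonicity of $T_{kg}$ in its argument and the fact that $u_\phi$ and $u_\psi$ are global attractors of every positive measurable function (established for convolution equations in \cite{AriBen01}). Fix an arbitrary $x_0>0$; since all the functions involved are \emph{bounded near zero}, it suffices to prove the inequality on $[0,x_0]$, and then let $x_0$ vary. The key observation is that the kernel inequalities immediately give, for any positive function $f$ and any $x\in[0,x_0]$,
\begin{equation*}
T_{\phi g}f(x)=\int_0^x\phi(x-s)g(f(s))\,ds\le\int_0^x k(x,s)g(f(s))\,ds=T_{kg}f(x)\le T_{\psi g}f(x),
\end{equation*}
so the three operators are squeezed pointwise on $[0,x_0]$.

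First I would establish the upper bound $u\le u_\psi$. Start from the solution $u$ itself: since $u=T_{kg}u\le T_{\psi g}u$ on $[0,x_0]$, the function $u$ is a subsolution of the convolution equation $(\psi,g)$ on $[0,x_0]$. Now iterate: applying $T_{\psi g}$ and using its monotonicity together with the sandwich inequality repeatedly, one gets $u\le T_{\psi g}u\le T_{\psi g}^2u\le\cdots\le T_{\psi g}^n u$ for every $n$ (here I use that $u\le T_{\psi g}u$ and monotonicity of $T_{\psi g}$ give a nondecreasing iteration dominating $u$). Since $u$ is positive and measurable and $u_\psi$ is the global attractor of $(\psi,g)$, the sequence $T_{\psi g}^n u$ converges pointwise to $u_\psi$ on $[0,x_0]$; passing to the limit in $u\le T_{\psi g}^n u$ yields $u\le u_\psi$ on $[0,x_0]$, hence everywhere as $x_0$ was arbitrary.

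Symmetrically, for the lower bound $u_\phi\le u$: from $u=T_{kg}u\ge T_{\phi g}u$ on $[0,x_0]$, $u$ is a supersolution of the convolution equation $(\phi,g)$ there, so $u\ge T_{\phi g}u\ge T_{\phi g}^2u\ge\cdots$ is a nonincreasing sequence of positive functions bounded below (by $0$), which by the attracting property of $u_\phi$ converges to $u_\phi$; passing to the limit gives $u_\phi\le u$. Combining the two halves yields $u_\phi\le u\le u_\psi$.

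The main obstacle I anticipate is the careful handling of the convergence step: the attractor statement in \cite{AriBen01} is about iterates of the convolution operator converging to the unique solution, and I must make sure the monotone iteration started from $u$ (a positive measurable, indeed continuous by Corollary~\ref{cor:1}, function) is covered by that hypothesis, and that pointwise passage to the limit is legitimate in the inequalities $u\le T_{\psi g}^n u$ — this is fine since the inequalities are preserved under pointwise limits. A secondary subtlety is that the kernel comparison~(\ref{eq9}) is only valid inside $\mathcal{T}_{x_0}$, so all iteration arguments must stay on $[0,x_0]$; but since $T_{kg}$, $T_{\phi g}$ and $T_{\psi g}$ all respect causality (values on $[0,x]$ depend only on the argument on $[0,x]$), restricting to $[0,x_0]$ is harmless, and letting $x_0\to\infty$ (or ranging over all $x_0$) extends the conclusion to the whole common domain.
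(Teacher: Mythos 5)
Your proposal is correct and follows essentially the same route as the paper: both use the pointwise operator inequality $T_{\phi g}\leq T_{kg}\leq T_{\psi g}$ to get $T^n_{\phi g}u\leq u\leq T^n_{\psi g}u$ by monotone iteration, and then pass to the limit using the global-attractor property of $u_\phi$ and $u_\psi$ from \cite{AriBen01}. Your extra care about restricting to $[0,x_0]$ is a reasonable (and harmless) refinement that the paper leaves implicit.
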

\begin{proof}
Note that
\begin{equation}
\label{eq15}
T_{\phi g}\leq T_{kg}\leq T_{\psi g},
\end{equation}
because $\phi \leq k\leq \psi $. Thus, $T_{\phi g}u\leq u=T_{kg}u\leq T_{\psi g}u$, and then, for every natural $n$,
\begin{equation*}
T^n_{\phi g}u\leq u\leq T^n_{\psi g}u.
\end{equation*}

Since both, $u_{\phi }$ and $u_{\psi }$, are global attractors, the sequences
  $(T^n_{\phi g}u)_{n\in \N }$ and $(T^n_{\psi g}u)_{n\in \N }$ converge to $u_{\phi }$ and $u_{\psi }$,
  respectively. Thus, taking limits as $n$ tends to $\infty $, we have $u_{\phi }\leq u\leq u_{\psi }$.
\end{proof}

\begin{proposition}
\label{prop:upsi}
  The sequence $(T^n_{kg}u_{\psi })_{n\in \N }$ converges to the maximum solution of the equation $\left( k,g\right) $.
\end{proposition}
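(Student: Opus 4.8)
The plan is to show that $(T^n_{kg}u_{\psi})_{n\in\N}$ is a nonincreasing sequence which converges pointwise to a solution of $(k,g)$ dominating every solution.

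First I would observe that $u_{\psi}$ is a supersolution of $(k,g)$. Indeed, since $\phi\le k\le\psi$ on the triangle $\mathcal{T}_{x_0}$ by (\ref{eq9}), we have $T_{kg}\le T_{\psi g}$, whence
\[
T_{kg}u_{\psi}\le T_{\psi g}u_{\psi}=u_{\psi}.
\]
Applying the monotone operator $T_{kg}$ repeatedly gives $T^{n+1}_{kg}u_{\psi}\le T^n_{kg}u_{\psi}$ for every $n$, so $(T^n_{kg}u_{\psi})_{n\in\N}$ is nonincreasing; being a sequence of nonnegative functions, it converges pointwise on $[0,x_0]$ to some function $w\ge 0$.

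Next I would identify $w$ as a solution of $(k,g)$. Fix $x\in[0,x_0]$ and put $\phi_n(s)=k(x,s)\,g\bigl(T^n_{kg}u_{\psi}(s)\bigr)$. Since $g$ is increasing and $T^n_{kg}u_{\psi}\le u_{\psi}$, each $\phi_n$ is dominated by the function $s\mapsto k(x,s)\,g(u_{\psi}(s))$, which is integrable on $[0,x]$ because its integral there equals $T_{kg}u_{\psi}(x)\le u_{\psi}(x)<\infty$; moreover $\phi_n(s)\to k(x,s)\,g(w(s))$ pointwise. By the dominated convergence theorem,
\[
T^{n+1}_{kg}u_{\psi}(x)=\int_0^x\phi_n(s)\,ds\ \longrightarrow\ \int_0^x k(x,s)\,g(w(s))\,ds=T_{kg}w(x),
\]
while the left-hand side also tends to $w(x)$. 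Hence $w=T_{kg}w$ on $[0,x_0]$, i.e. $w$ is a solution of $(k,g)$. (At least one solution exists here, since a solution of $(\phi,g)$ exists, so (\ref{eq9}) together with Theorem \ref{teo:subsol} produces one for $(k,g)$; this makes the phrase ``the maximum solution'' meaningful.)

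Finally I would check maximality. Let $u$ be any solution of $(k,g)$. By Lemma \ref{prop:u<upsi}, $u\le u_{\psi}$, so applying $T_{kg}$ $n$ times gives $u=T^n_{kg}u\le T^n_{kg}u_{\psi}$; letting $n\to\infty$ yields $u\le w$. Thus $w$ is a solution lying above every solution, i.e. the maximum solution, and it is precisely the limit of $(T^n_{kg}u_{\psi})_{n\in\N}$. The main obstacle is the passage to the limit inside the integral: unlike the increasing case in Theorem \ref{teo:subsol}, where monotone convergence applies with no integrability hypothesis, here the iterates decrease, so one must exhibit the integrable majorant $k(x,\cdot)\,g(u_{\psi}(\cdot))$ — which is exactly what the supersolution inequality $T_{kg}u_{\psi}\le u_{\psi}$ supplies.
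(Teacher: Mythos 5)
Your proof is correct and follows essentially the same route as the paper: show $u_\psi$ is a supersolution so the iterates decrease, pass to the limit inside the integral, and use Lemma \ref{prop:u<upsi} to bound any solution by the limit. The one difference is that the paper simply cites ``the monotone convergence theorem'' for the decreasing sequence, whereas you correctly observe that this step really needs an integrable majorant (supplied by $k(x,\cdot)\,g(u_\psi(\cdot))$) and invoke dominated convergence --- a slightly more careful justification of the same step.
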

\begin{proof}
  By Lemma \ref{prop:u<upsi}, $u\leq u_\psi$. Thus, from the monotony of the operators $T_{k g}$
  and $T_{\psi g}$, it follows that
  \begin{equation*}
    u=T_{kg}u\leq T_{kg}u_{\psi }\leq T_{\psi g}u_{\psi }=u_{\psi }.
  \end{equation*}
  Hence, for every $x\geq 0$, the decreasing sequence
  $(T^n_{kg}u_{\psi }\left( x\right) )_{n\in \N }$ is bounded from below by $u\left( x\right) $, so it converges pointwisely to a function
  \begin{equation*}
    u_{\max }\left( x\right) :=\lim_{n\to \infty }T^n_{kg}u_{\psi }\left( x\right) =\inf \left\{ T^n_{kg}u_{\psi }\left( x\right) :n\in\N\right\} .
  \end{equation*}
  By the monotone convergence theorem, we can assure that $u_{\max }$ is a solution of the equation $\left( k,g\right) $;
  moreover from the way of constructing $u_{\max }$, it is immediate that it is the maximum solution.
\end{proof}

With a similar proof we obtain an analogous result for the minimum solution.
\begin{proposition}
\label{prop:uphi}
  The sequence $(T^n_{kg}u_{\phi })_{n\in \N }$ converges to the minimum solution of the equation $\left( k,g\right) $.
\end{proposition}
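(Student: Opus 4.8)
The plan is to mirror the proof of Proposition \ref{prop:upsi}, but working from below with $u_\phi$ in place of $u_\psi$ and using monotone (increasing) sequences instead of decreasing ones. First I would recall from Lemma \ref{prop:u<upsi} that any solution $u$ of $\left(k,g\right)$ satisfies $u_\phi\leq u$. Combining this with the monotonicity of the operators $T_{kg}$ and $T_{\phi g}$ and the inequality $T_{\phi g}\leq T_{kg}$ from (\ref{eq15}), I get
\begin{equation*}
u_\phi=T_{\phi g}u_\phi\leq T_{kg}u_\phi\leq T_{kg}u=u,
\end{equation*}
so $u_\phi\leq T_{kg}u_\phi$; that is, $u_\phi$ is a subsolution of $\left(k,g\right)$, and applying $T_{kg}$ repeatedly shows that $(T^n_{kg}u_\phi)_{n\in\N}$ is a nondecreasing sequence which is bounded above by $u$ (in fact by every solution, but one solution suffices for boundedness — here one may invoke, e.g., Proposition \ref{prop:upsi} together with Lemma \ref{prop:u<upsi} to guarantee that a solution exists at all).

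Next I would define the pointwise limit $u_{\min}\left(x\right):=\lim_{n\to\infty}T^n_{kg}u_\phi\left(x\right)=\sup\left\{T^n_{kg}u_\phi\left(x\right):n\in\N\right\}$, which exists on $[0,\infty)$ because the sequence is monotone and bounded. As in Proposition \ref{prop:upsi}, passing the limit inside the integral defining $T_{kg}$ is justified by the monotone convergence theorem — applied to the nondecreasing sequence $\phi_n\left(s\right)=k\left(x,s\right)g\left(T^n_{kg}u_\phi\left(s\right)\right)$, which is indeed nondecreasing in $n$ because $g$ and $T_{kg}$ are monotone — so that $u_{\min}=T_{kg}u_{\min}$, i.e. $u_{\min}$ is a solution of $\left(k,g\right)$. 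Finally, minimality follows from the construction: if $u$ is any solution, then $u_\phi\leq u$ implies $T^n_{kg}u_\phi\leq T^n_{kg}u=u$ for all $n$, whence $u_{\min}=\lim_n T^n_{kg}u_\phi\leq u$.

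The one point that is slightly less routine than in the decreasing case is the upper bound needed to define the limit and to apply monotone convergence: one must know a priori that the increasing sequence $(T^n_{kg}u_\phi)$ stays finite. This is handled by noting that $\left(k,g\right)$ does have a solution (guaranteed here by the running assumptions of the section, since $u_\phi$ itself is then a subsolution and Theorem \ref{teo:subsol} applies, or more simply because $u_{\max}$ from Proposition \ref{prop:upsi} exists), and any such solution dominates the whole sequence. Apart from that, the argument is word-for-word the monotone-convergence bootstrap of Proposition \ref{prop:upsi} run in the opposite direction, so I would simply say "with a similar proof" and spell out only the two sign changes (subsolution instead of supersolution, $\sup$/nondecreasing instead of $\inf$/nonincreasing).
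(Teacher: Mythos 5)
Your proof is correct and is exactly the argument the paper intends: the paper itself only says ``with a similar proof'' and leaves the mirror-image of Proposition \ref{prop:upsi} to the reader, which is what you have written out. Your extra remark about needing an a priori upper bound for the nondecreasing sequence is a sensible observation, and your resolution (any solution of $\left(k,g\right)$, whose existence is guaranteed by the section's standing assumptions, dominates the sequence) is the right one.
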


Now we are in position to give a result on the attracting character of the maximum and minimum solutions of equation $\left( k,g\right) $.

\begin{theorem}
The maximum (resp. minimum) solution of the equation $\left( k,g\right) $ attracts globally any measurable function bounded fom below (resp. above) by the maximum (resp. minimum) solution.
\end{theorem}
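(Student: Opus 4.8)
The plan is to prove the statement for the maximum solution; the minimum-solution case follows by a symmetric argument (replacing ``bounded from below by $u_{\max}$'' with ``bounded from above by $u_{\min}$'', and exchanging the roles of $\psi$ and $\phi$). Denote by $u_{\max}$ the maximum solution of $(k,g)$, whose existence and description as $\lim_n T^n_{kg}u_\psi$ is given by Proposition \ref{prop:upsi}. Let $f$ be any positive measurable function with $f\ge u_{\max}$. We want to show $T^n_{kg}f\to u_{\max}$ pointwise on $[0,x_0]$.

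First I would establish the lower bound. Since $f\ge u_{\max}$ and $T_{kg}$ is monotone, $T^n_{kg}f\ge T^n_{kg}u_{\max}=u_{\max}$ for every $n$, because $u_{\max}$ is a fixed point of $T_{kg}$. So the whole sequence stays above $u_{\max}$, and it suffices to produce a matching upper bound in the limit. Next I would bring in the convolution comparison: by (\ref{eq15}) we have $T_{kg}\le T_{\psi g}$, hence $T^n_{kg}f\le T^n_{\psi g}f$ for all $n$ (again using monotony, iterated). Now $u_\psi$ is a \emph{global} attractor of all positive measurable functions, so $T^n_{\psi g}f\to u_\psi$ pointwise; in particular $\limsup_n T^n_{kg}f\le u_\psi$ pointwise. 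This alone is not enough, since $u_\psi$ may be strictly larger than $u_{\max}$, so the argument must be refined.

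The key step is an interleaving/squeezing argument. Fix $x\in[0,x_0]$. For each fixed $m$, monotony gives $T^{n}_{kg}f = T^{n-m}_{kg}\bigl(T^{m}_{kg}f\bigr)\le T^{n-m}_{kg}\bigl(T^{m}_{\psi g}f\bigr)$ for $n\ge m$, using $T^m_{kg}f\le T^m_{\psi g}f$. Letting $n\to\infty$ and using that $T^{j}_{kg}$ applied to a fixed bounded-near-zero function still has a limit (I would invoke the monotone/dominated structure exactly as in the proof of Proposition \ref{prop:upsi}, noting $T^{m}_{\psi g}f\le u_\psi$ eventually and $T_{kg}$ is monotone, so $T^{j}_{kg}(T^m_{\psi g}f)$ is decreasing in $j$ once it lies below $u_\psi\ge u_{\max}$, hence convergent by monotone convergence to a solution of $(k,g)$, which is therefore $\le u_{\max}$). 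Thus $\limsup_n T^n_{kg}f(x)\le u_{\max}(x)$. Combined with the lower bound $T^n_{kg}f\ge u_{\max}$ from the first step, we get $\lim_n T^n_{kg}f(x)=u_{\max}(x)$.

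I expect the main obstacle to be making precise the claim that, for a fixed bounded-near-zero function $h$ with $h\le u_\psi$, the iterates $T^j_{kg}h$ converge to a solution of $(k,g)$ that is dominated by $u_{\max}$: one must argue that $T_{kg}h\le T_{kg}u_\psi\le T_{\psi g}u_\psi=u_\psi$ puts the sequence under the decreasing iteration $T^j_{kg}u_\psi$, so $T^j_{kg}h\le T^{j-1}_{kg}u_\psi\to u_{\max}$, while simultaneously $T^j_{kg}h$ has its own limit by the same monotone-convergence-theorem reasoning used in Proposition \ref{prop:upsi} (the sequence is not monotone a priori, so one should instead sandwich: $u_{\max}=T^j_{kg}u_{\max}\le T^j_{kg}(\max\{h,u_{\max}\})$ and $h\le\max\{h,u_{\max}\}\le u_\psi$, then apply Proposition \ref{prop:upsi}-type convergence to $\max\{h,u_{\max}\}$ in place of $u_\psi$, getting a decreasing sequence with limit a solution $\ge u_{\max}$, hence $=u_{\max}$, which then squeezes $T^j_{kg}h$). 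Handling this sandwich cleanly, and confirming that all functions involved remain \emph{bounded near zero} so the monotone convergence theorem applies on $[0,x_0]$, is where the care is needed; everything else is routine monotonicity.
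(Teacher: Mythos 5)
Your first two steps coincide exactly with the paper's: the lower bound $u_{\max }=T^n_{kg}u_{\max }\leq T^n_{kg}f$ and the upper bound $T^n_{kg}f\leq T^n_{\psi g}f\to u_{\psi }$ via (\ref{eq15}) are how the published proof begins. The divergence --- and the gap --- is in how you pull the upper limit down from $u_{\psi }$ to $u_{\max }$. The interleaving $T^n_{kg}f\leq T^{n-m}_{kg}\bigl(T^m_{\psi g}f\bigr)$ is fine, but everything after it rests on the assertion that $T^m_{\psi g}f\leq u_{\psi }$ ``eventually'', and this is false in general: if $f\geq u_{\psi }$ (for instance $f=2u_{\psi }$, which is admissible since $u_{\max }\leq u_{\psi }$), then monotonicity gives $T^m_{\psi g}f\geq T^m_{\psi g}u_{\psi }=u_{\psi }$ for every $m$, so the iterates approach $u_{\psi }$ from above and never drop below it; pointwise convergence $T^m_{\psi g}f\to u_{\psi }$ yields no inequality in the direction you need. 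Your fallback sandwich with $\max \{h,u_{\max }\}$ founders on the same point, since it again requires $\max \{h,u_{\max }\}\leq u_{\psi }$, i.e.\ $h\leq u_{\psi }$. (A smaller slip: even when $h\leq u_{\psi }$, the sequence $T^j_{kg}h$ need not be decreasing, because $T_{kg}h\leq T_{\psi g}h\leq u_{\psi }$ does not give $T_{kg}h\leq h$; but there the squeeze $u_{\max }\leq T^j_{kg}h\leq T^{j-1}_{kg}u_{\psi }\to u_{\max }$ does close the argument, so that part is repairable.)

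For comparison, the paper finishes by considering the set $\Omega _f\left( x\right) $ of accumulation points of $(T^n_{kg}f\left( x\right) )_{n\in \N }$, noting $u_{\max }\left( x\right) \leq \Omega _f\left( x\right) \leq u_{\psi }\left( x\right) $, and then combining invariance of $\Omega _f$ under $T_{kg}$ with Proposition \ref{prop:upsi} to force $\Omega _f\left( x\right) =\left\{ u_{\max }\left( x\right) \right\} $ (itself a rather terse step, but one that does not need $T^m_{\psi g}f\leq u_{\psi }$). If you want to stay close to your own route, the missing ingredient is a genuine supersolution dominating $f$ rather than $u_{\psi }$ itself: since $f$ is bounded near zero, choose a constant $M\geq f$ with $T_{kg}M\leq M$ on some $[0,\delta ]$ (as in the proof of Theorem \ref{teo:subsol}); then $T^n_{kg}f\leq T^n_{kg}M$, the majorant is decreasing and bounded below by $u_{\max }$, so by monotone convergence it decreases to a solution of $\left( k,g\right) $ lying above $u_{\max }$, which by maximality is $u_{\max }$ itself. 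That squeeze closes the argument where yours currently does not.
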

\begin{proof}
We shall prove the theorem for the maximum solution. For the minimum solution, there can be used analogous arguments.

Let $u_{\max }$ denote the maximum solution of the equation $\left( k,g\right) $, and let $f$ be a measurable function such that $u_{\max }\leq f$. We have to show that $(T^n_{kg}f)_{n\in \N }$ converges to $u_{\max }$.

From (\ref{eq15}) and the increasing character of the operators $T_{kg}$ and $T_{\psi g}$, we obtain
\begin{equation*}
u_{\max }=T^n_{kg}u_{\max }\leq T^n_{kg}f\leq T^n_{\psi g}f,\qquad \forall n\in \N .
\end{equation*}
Thus, for every $x\geq 0$, the sequence $(T^n_{kg}f\left( x\right) )_{n\in \N }$ is bounded from below by $u_{\max }\left( x\right) $, and from above by the  sequence $(T^n_{\psi g}f\left( x\right) )_{n\in \N }$, which, as said above, converges to $u_\psi \left( x\right) $. Then, the set of accumulation points of the sequence $(T^n_{kg}f\left( x\right) )_{n\in \N }$, denoted by $\Omega _f\left( x\right) $, verifies $u_{\max }\left( x\right) \leq \Omega_f\left( x\right) \leq u_\psi \left( x\right) $.

To finish the proof it suffices to show that $\Omega_f\left( x\right) =\left\{ u_{\max }\left( x\right) \right\} $. This is obvious, because $\Omega_f\left( x\right) $ is invariant under $T_{kg}$ and, by Proposition \ref{prop:upsi}, the sequence $(T^n_{kg}u_\psi )_{n\in \N }$ converges to $u_{\max }$. Hence, $u_{\max }\left( x\right) \leq \Omega_f\left( x\right) \leq u_{\max}\left( x\right) $.
\end{proof}

\begin{remark}
Note that if we could assure the uniqueness of solutions for the nonconvolution equation $\left( k,g\right) $, then the maximum and the minimum solutions are the same. In that case, a simple comparison reasoning guarantees that the unique solution is a global attractor of any  positive and measurable function.
\end{remark}

\section{Final Remarks}
For convolution equations, there are a lot of results about existence and uniqueness of continuous solutions with no other assumptions on the kernel than the local integrability. Just mention, for example, the theory of Abel integral equations.

When nonconvolution equations are considered, a wide range of situations appears. The aim of the following examples is to illustrate such variety. In the first example, it is shown that if $\mathbf{K}_3$ does not hold, then the only continuous solution for equation $\left( k,g\right) $ is the trivial one. In the second example, we will see that if the kernel does not verify $\mathbf{K}_1$, then we cannot guarantee the uniqueness of solutions.

\subsection{An Equation with Discontinuous Solutions}

Let us consider equation $\left( k,g\right) $ with $g$ verifying $\mathbf{G}_1$ and $k$ a strictly increasing function of the variable $x$, that is, for any fixed $s$, the function $x\mapsto k\left( x,s\right) $ is strictly increasing. We also assume that $k\left( x,s\right) $ has a simple discontinuity at $x_0$ in the following sense. Let us define
\begin{equation*}
k_0^-\left( s\right) :=\lim _{x\to x_0 \,\!^-}k\left( x,s\right) \qquad \text{and}\qquad k_0^+\left( s\right) :=\lim _{x\to x_0 \,\!^+}k\left( x,s\right) ;
\end{equation*}
then, for every $s$, $k_0^-\left( s\right) <k_0^+\left( s\right) $. Let $u$ be a solution for equation $\left( k,g\right) $. If $x<x_0$, then
\begin{equation*}
u\left( x\right) =\int _0^x k\left( x,s\right) g\left( u\left( s\right) \right) \, ds<\int _0^{x_0}k_0^-\left( s\right) g\left( u\left( s\right) \right) \, ds.
\end{equation*}
In a similar way, for $x>x_0$,
\begin{equation*}
u\left( x\right) =\int _0^x k\left( x,s\right) g\left( u\left( s\right) \right) \, ds>\int _0^{x_0}k_0^+\left( s\right) g\left( u\left( s\right) \right) \, ds.
\end{equation*}
Hence, taking lateral limits, we obtain
\begin{equation*}
\lim _{x\to x_0 \,\!^-}u\left( x\right) \leq \int _0^{x_0}k_0^-\left( s\right) g\left( u\left( s\right) \right) \, ds < \int _0^{x_0}k_0^+\left( s\right) g\left( u\left( s\right) \right) \, ds \leq \lim _{x\to x_0 \,\!^+} u\left( x\right) .
\end{equation*}
Thus, $u$ also has a simple discontinuity at $x_0$. Note that with an analogous proof, we can show that the function $K\left( x\right) =\int _0^x k\left( x,s\right) \, ds$ has a simple discontinuity at $x_0$. So, condition $\mathbf{K}_3$ does not hold.

\subsection{An Equation with Multiple Solutions}

Let us consider the equation
\begin{equation}
\label{eq16}
u\left( x\right) = \int _0^x \left( x-s\right) ^{\alpha }s^{-\alpha -1}g\left( u\left( s\right) \right) \, ds,\qquad \alpha \in (-1,0).
\end{equation}
It is an equation of type (\ref{eq:vol1}), where $k\left( x,s\right) = \left( x-s\right) ^{\alpha }s^{-\alpha -1}$. We are considering a nonlocally bounded kernel; hence, condition $\mathbf{K}_1$ does not hold. As $-\alpha -1<0$,
\begin{equation*}
k\left( x+\lambda ,s+\lambda \right) =\left( x-s\right) ^{\alpha }\left( s+\lambda \right) ^{-\alpha -1}<\left( x-s\right) ^{\alpha }s^{-\alpha -1}=k\left( x,s\right) ,
\end{equation*}
for every $\lambda >0$. Thus, condition $\mathbf{K}_4$ is not verified either.

Now, let us suppose that equation (\ref{eq16}) has a positive constant solution, $u\left( x\right) =M$, for some $M$. Then,
\begin{eqnarray*}
M & = & \int _0^x \left( x-s\right) ^{\alpha }s^{-\alpha -1}g\left( M\right) \, ds=g\left( M\right) \int _0^x \left( x-s\right) ^{\alpha }s^{-\alpha -1}\, ds \\
& = & g\left( M\right) B\left( \alpha +1,-\alpha \right) .
\end{eqnarray*} 
Therefore, $u$ is a solution for equation (\ref{eq16}) if and only if $M$ is a root of the scalar equation
\begin{equation}
\label{eq17}
M-g\left( M\right) B\left( \alpha +1,-\alpha \right) =0.
\end{equation}
Equation (\ref{eq17}) depends on $g$. Then, the number of its roots also depends on $g$. It is not difficult to find nonlinearities in order to obtain any fixed number of roots for (\ref{eq17}). For instance, let
\begin{equation*}
g\left( x\right) =\frac{1}{B\left( \alpha +1,-\alpha \right) }\left( x^3 -3x^2 +3x\right) ,
\end{equation*}
(note that $g$ verifies $\mathbf{G}_1$). In this case, equation (\ref{eq17}) becomes
\begin{equation*}
x^3 -3x^2 +2x=0.
\end{equation*}
The roots of this equation are: $0$, $1$ and $2$; and the positive constant functions $u_1\left( x\right) =1$ and $u_2\left( x\right) =2$ are two solutions for equation (\ref{eq16}).

\end{document}